\newtheorem{theorem}{Theorem}[section]
\newtheorem{lemma}[theorem]{Lemma}
\theoremstyle{definition}
\author{J. M. Almira}
\title{
An elementary inductive proof that $AB=I$ implies $BA=I$ for matrices
}
\begin{document}
\keywords{Matrices,  Linear Algebra, Abstract  Algebra}


\subjclass[2010]{15A03, 15A09}

\address{Departamento de Matem\'{a}ticas, Universidad de Ja\'{e}n, E.P.S. Linares,  Campus Cient\'{\i}fico Tecnol\'{o}gico de Linares, Cintur\'{o}n Sur s/n, 23700 Linares, Spain}
\email{jmalmira@ujaen.es}

\begin{abstract}
In this note we give an elementary demonstration of the fact that $AB=I_n$ implies $BA=I_n$ for square matrices $A,B$ with coefficients in a field $\mathbb{K}$. \end{abstract}
\maketitle

\markboth{J. M. Almira}{ $AB=I$ implies $BA=I$}
\section{Introduction}
Let $\mathbb{K}$ be a division ring and $M_n(\mathbb{K})$ be the ring of square matrices of order $n$ with coefficients in $\mathbb{K}$. Let us denote by $I_n$ de identity matrix of orden $n$, which is the unit element of $M_n(\mathbb{K})$. A very basic important fact about matrices is that they are non-commuting objects, even if $\mathbb{K}$ is commutative. This is important because matrices naturally appear in many applications where their non-commutativity is a key ingredient (I am thinking about Quantum Mechanics but there are thousands of other examples for this claim).  Thus, from the simple assumption that $AB=I_n$ for two square matrices $A,B\in M_n(\mathbb{K})$, it is not self evident that $BA=I_n$ since $AB$ and $BA$ may differ. Thus,  when introducing invertible linear maps or invertible matrices in a Linear Algebra introductory course, it is usual to force the equalities $AB=BA=I_n$ to tell that matrices $A,B$ are  invertible and $B=A^{-1}$ (see, e.g., \cite[Section 2.3]{N}, \cite[page 214]{A}, and \cite[page 25]{T}. In spite of the strong differences between all these books, the same path for a definition of invertible matrices is adopted). On the other hand, the following result  is well known and classic \cite[page 101]{vW} (see also  \cite[page 14]{HJ}):

\begin{theorem} \label{main} If $\mathbb{K}$ is a field and $A,B\in M_n(\mathbb{K})$, then $AB=I_n$ implies $BA=I_n$. 
\end{theorem}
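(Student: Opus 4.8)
The plan is to argue by induction on the size $n$, using only elementary row and column operations together with block multiplication, so that no determinant or rank theory is needed. For the base case $n=1$ the claim is immediate: $ab=1$ in a field forces $a\neq 0$ and $b=a^{-1}$, whence $ba=1$.

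For the inductive step I would assume the statement for size $n-1$ and take $A,B\in M_n(\mathbb{K})$ with $AB=I_n$. The central observation is that the relation $AB=I_n$ is preserved under the substitutions $A\mapsto EAF$ and $B\mapsto F^{-1}BE^{-1}$ whenever $E,F$ are invertible elementary matrices, since then $(EAF)(F^{-1}BE^{-1})=E(AB)E^{-1}=I_n$; moreover such elementary matrices have explicit two-sided inverses, so this uses nothing beyond direct computation. Reading off the $(1,1)$ entry of $AB=I_n$ gives $\sum_k a_{1k}b_{k1}=1$, so some $a_{1k}\neq 0$; after a column permutation I may assume $a_{11}\neq 0$. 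A sequence of row operations then scales the pivot to $1$ and clears the rest of the first column, and a sequence of column operations clears the rest of the first row, so that after renaming I have
\[
A=\begin{pmatrix} 1 & 0 \\ 0 & A'\end{pmatrix},\qquad A'\in M_{n-1}(\mathbb{K}),
\]
while the transformed $B$ still satisfies $AB=I_n$.

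Next I would write $B$ in the matching block form $B=\begin{pmatrix}\beta & u^{\top}\\ v & B'\end{pmatrix}$ and expand the product:
\[
AB=\begin{pmatrix} 1 & 0 \\ 0 & A'\end{pmatrix}\begin{pmatrix}\beta & u^{\top}\\ v & B'\end{pmatrix}=\begin{pmatrix}\beta & u^{\top}\\ A'v & A'B'\end{pmatrix}=I_n .
\]
Comparing blocks yields $\beta=1$, $u=0$, $A'v=0$, and $A'B'=I_{n-1}$. Here the induction hypothesis enters: from $A'B'=I_{n-1}$ I obtain $B'A'=I_{n-1}$, so $A'$ has $B'$ as a genuine two-sided inverse. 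This is exactly what is needed to kill the stray block $v$, since $v=B'A'v=B'\cdot 0=0$. Consequently $B=\operatorname{diag}(1,B')$, and a final block multiplication gives $BA=\operatorname{diag}(1,B'A')=I_n$. Undoing the elementary transformations only conjugates $BA$ by invertible matrices, so the identity $BA=I_n$ survives, completing the induction.

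I expect the main obstacle to be organizing the reduction cleanly: one must verify that the elementary substitutions really preserve $AB=I_n$ and can be inverted explicitly, and above all one must notice that the induction hypothesis is used twice over --- once to pass from $A'B'=I_{n-1}$ to $B'A'=I_{n-1}$, and then crucially to force the off-diagonal block $v$ to vanish via the now-available left inverse of $A'$. Obtaining $v=0$ is the linchpin; without it the block form of $B$ would fail to be diagonal and the computation of $BA$ would not close.
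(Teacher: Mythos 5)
Your proof is correct, and although it shares the paper's broad strategy (induction on $n$, elementary operations, block multiplication), it is organized differently in a way that makes it genuinely leaner. The paper pivots on the first \emph{column} of $A$: to guarantee that this column is nonzero it must first prove a separate inductive lemma (that $AB=I_n$ forces $Ax=0\Rightarrow x=0$), and it invokes that lemma a second time inside the main argument to kill the lower-left block of the reduced $B$. You instead pivot on the first \emph{row} of $A$, whose non-vanishing is free: the $(1,1)$ entry of $AB=I_n$ reads $\sum_k a_{1k}b_{k1}=1$, so some $a_{1k}\neq 0$. Moreover you eliminate the stray block $v$ using the two-sided inverse already supplied by the induction hypothesis ($A'v=0$ gives $v=B'A'v=0$), so the auxiliary injectivity lemma disappears entirely and your argument is a single self-contained induction. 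The trade-offs run in both directions: by allowing column as well as row operations you reach the block-diagonal form $\operatorname{diag}(1,A')$, which makes the block bookkeeping trivial, whereas the paper, using row operations only, must work with an upper-triangular reduced matrix $\left[\begin{smallmatrix} 1 & (v^*)^t \\ 0 & \widetilde{A^*}\end{smallmatrix}\right]$ and needs a final computation, multiplying the relation $(w^*)^t+(v^*)^t\widetilde{B^*}=0$ by $\widetilde{A^*}$, to clear the remaining off-diagonal block of $B^*A^*$. The price you pay is that your two-sided substitutions only conjugate $BA$ rather than preserve it (the paper's one-sided operations give $B^*A^*=BA$ exactly), so you need the closing observation that $F^{-1}(BA)F=I_n$ forces $BA=I_n$ --- which you correctly supply.
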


Hence, for any field $\mathbb{K}$ and any natural number $n$,  we can claim that a square matrix $A\in M_n(\mathbb{K})$ is invertible (with inverse $B$) if and only if $AB=I_n$.

In this note we give an elementary demonstration of Theorem \ref{main}. By ``elementary" we mean that our proof follows from the very definitions of matrix and product of a matrix, with no extra help of more sophisticated results, as the use of dimensions of vector spaces or other ring- theoretical properties, as being Noetherian. The proof is also elementary in the sense that it relies on the concept and properties of the so called elementary operations on matrices. Finally, and no less important, this proof can be faced by any good student of a first year course in Mathematics, Physics or Engineering.

\section{The proof}
For our proof it is important to know how Gaussian row-reduction elimination process works. Concretely, it is important to observe that, for the elementary matrices associated to elementary operations a direct concept of inverse can be introduced. By this I mean that, for example, if $E_{i,j}$ denotes the matrix which results from interchanging the $i$-th and $j$-th rows of the identity matrix $I_n$, then it is clear that $E_{i,j}E_{i,j}=I_{n}$ so that we can define $E_{i,j}^{-1}=E_{i,j}$. Similar considerations can be done for the other types of elementary matrices: the elementary operation that reverses the work done by a given elementary operation is well known, is unique, and can be used to define the inverse matrix of the elementary matrix associated to the given elementary operation. Thus, for every elementary matrix $E$ we perfectly know what we mean by $E^{-1}$.  Moreover, we also need to use the following known fact: when we perform an elementary operation whose associated elementary matrix is $E$ to a given matrix $A$, the new matrix we get is $A^*=EA$.

\begin{lemma} \label{uno}
Let $A,B\in M_n(\mathbb{K})$ and assume $A=\left[\begin{array}{ccc} \alpha &   v^t \\ u & \widetilde{A}\end{array}\right]$ and $B=\left[\begin{array}{ccc} \beta &   w^t \\ h & \widetilde{B}\end{array}\right]$. Then 
\[
AB= \left[\begin{array}{ccc} \alpha \beta +v^th &   \alpha w^t+v^t\widetilde{B} \\ \beta u +\widetilde{A} h & uw^t+ \widetilde{A} \widetilde{B}\end{array}\right]
\]
 \end{lemma}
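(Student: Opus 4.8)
The statement is nothing more than the block-multiplication rule for a single $1+(n-1)$ partition of the rows and columns, so the plan is to reduce it to the scalar definition of the matrix product and verify the four blocks separately. I would begin by fixing index conventions consistent with the block notation: write $\alpha=A_{11}$, $\beta=B_{11}$, let $v^t=(A_{12},\dots,A_{1n})$ and $w^t=(B_{12},\dots,B_{1n})$ be the top rows minus the corner, let $u=(A_{21},\dots,A_{n1})^t$ and $h=(B_{21},\dots,B_{n1})^t$ be the left columns minus the corner, and let $\widetilde{A}=(A_{ij})_{2\le i,j\le n}$ and $\widetilde{B}=(B_{ij})_{2\le i,j\le n}$ be the trailing $(n-1)\times(n-1)$ blocks.

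The engine of the proof is the defining formula $(AB)_{ij}=\sum_{k=1}^n A_{ik}B_{kj}$, which I would split as $A_{i1}B_{1j}+\sum_{k=2}^n A_{ik}B_{kj}$, isolating the contribution of the first row and column from that of the trailing blocks. I would then run through the four cases determined by whether $i=1$ or $i\ge 2$ and whether $j=1$ or $j\ge 2$. For $i=j=1$ this gives $\alpha\beta+\sum_{k=2}^n A_{1k}B_{k1}=\alpha\beta+v^th$. For $i=1$ and $j\ge 2$ it gives $\alpha B_{1j}+\sum_{k=2}^n A_{1k}B_{kj}$, which is the $j$-th entry of $\alpha w^t+v^t\widetilde{B}$. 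The case $i\ge 2$, $j=1$ is symmetric and yields the $i$-th entry of $\beta u+\widetilde{A}h$, while $i\ge 2$, $j\ge 2$ gives $A_{i1}B_{1j}+\sum_{k=2}^n A_{ik}B_{kj}$, the $(i,j)$ entry of $uw^t+\widetilde{A}\widetilde{B}$.

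The only point requiring care — and the closest thing to an obstacle in what is otherwise a routine verification — is recognizing each resulting scalar sum as the correct entry of the advertised scalar, vector, matrix, or outer product: in particular that $\alpha\beta$ and $v^th$ are scalars occupying the $(1,1)$ corner, that $v^t\widetilde{B}$ is a row vector and $\widetilde{A}h$ a column vector, and that $uw^t$ is the rank-one outer product whose $(i,j)$ entry is precisely $A_{i1}B_{1j}$. Once the index bookkeeping is pinned down, each of the four cases is an immediate rewriting of the split sum, and reassembling the four blocks produces exactly the claimed matrix.
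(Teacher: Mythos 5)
Your proof is correct and is exactly the argument the paper intends: the paper's proof of this lemma is simply ``This is a direct computation,'' and your entrywise verification via $(AB)_{ij}=A_{i1}B_{1j}+\sum_{k=2}^n A_{ik}B_{kj}$, split into the four block cases, is precisely that computation carried out in full.
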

 
 \begin{proof}
 This is a direct computation. 
 \end{proof}

\begin{lemma} \label{dos} Assume that $A,B\in M_n(\mathbb{K})$ satisfy $AB=I_n$. If $Ax=0$, then $x=0$. 
\end{lemma}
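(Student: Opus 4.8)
The plan is to study the homogeneous system $Ax=0$ through the reduced row echelon form of $A$, using the hypothesis $AB=I_n$ only at the single point where it is needed: to forbid a zero row from appearing. First I would run Gaussian elimination on $A$, producing a product of elementary matrices $E$ such that $R:=EA$ is in reduced row echelon form. The crucial structural point, already stressed before the lemma, is that each elementary matrix has a genuine two-sided inverse, obtained from the reverse elementary operation; hence $E$ itself has a two-sided inverse $E^{-1}$ satisfying $EE^{-1}=E^{-1}E=I_n$. Consequently $E$ can have no zero row, since a zero $i$-th row of $E$ would force a zero $i$-th row in the product $EE^{-1}=I_n$, which is absurd.

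Next I would feed the hypothesis into this reduction. Left-multiplying $AB=I_n$ by $E$ gives $RB=EAB=E$. Because $R$ occurs here as the left factor and $E$ has no zero row, $R$ can have no zero row either: a zero $i$-th row of $R$ would yield a zero $i$-th row of $RB=E$. Now an $n\times n$ matrix in reduced row echelon form with no zero row has exactly $n$ pivots distributed among its $n$ columns, which forces the pivots onto the diagonal and every off-pivot entry to vanish; that is, $R=I_n$. Thus $EA=I_n$.

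The conclusion is then immediate: if $Ax=0$, then $x=I_nx=(EA)x=E(Ax)=E\cdot 0=0$, as required.

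I expect the main obstacle to be the step asserting $R=I_n$, namely justifying, purely from the shape of a reduced row echelon matrix together with the absence of a zero row, that $R$ must be the identity, supported by the observation that a product of elementary matrices cannot have a zero row. Everything else is routine bookkeeping with the identity $EA=I_n$, and notably this argument never invokes dimensions of vector spaces or any ring-theoretic property, in keeping with the elementary spirit of the note.
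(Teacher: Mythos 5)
Your proof is correct, and it takes a genuinely different route from the paper's. The paper proves Lemma \ref{dos} by induction on $n$: it decomposes $A$ and $B$ into blocks as in Lemma \ref{uno}, rules out a vanishing first column of $A$ by deriving a contradiction from the induction hypothesis, and otherwise clears the first column with elementary operations so as to pass to the $(n-1)\times(n-1)$ blocks, to which the induction hypothesis applies. You avoid induction entirely: you take the full Gauss--Jordan reduction $R=EA$ to reduced row echelon form, note that a zero $i$-th row in a left factor forces a zero $i$-th row in any product (applied twice, to $EE^{-1}=I_n$ and to $RB=E$), and conclude that $R$, being a square reduced row echelon matrix with no zero row, must equal $I_n$; then $Ax=0$ gives $x=E(Ax)=0$. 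Both arguments rest on the same two facts the paper isolates before the lemma (elementary operations act by left multiplication, and each elementary matrix has an explicit two-sided inverse), but yours trades the induction and case analysis for the existence of the reduced row echelon form together with its structural characterization in the square, no-zero-row case, whereas the paper's route needs only a single column-clearing step of elimination rather than the full reduction algorithm. A notable bonus of your approach is that it delivers a left inverse, $EA=I_n$, from which Theorem \ref{main} follows at once: $E=E(AB)=(EA)B=B$, hence $BA=EA=I_n$. So your argument, essentially unchanged, could replace not just Lemma \ref{dos} but the paper's entire inductive proof of the main theorem.
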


\begin{proof}
We proceed by induction on $n$. The result trivially holds for $n=1$, since $AB=1$ implies $A\neq 0$ for scalars. Hence $Ax=0$ implies $x=0$. Assume $n>1$, decompose $A,B$ as the formulas given in Lemma \ref{uno} and impose $AB=I_n$. We decompose our proof in two cases:

\noindent \textbf{Case 1. } $A=\left[\begin{array}{ccc} 0 &   v^t \\ 0 & \widetilde{A}\end{array}\right]$.

In this case, we have that $\alpha=0$ and $u=0$, so that Lemma \ref{uno} tell us that 
\[
I_n=AB= \left[\begin{array}{ccc} v^th &   v^t\widetilde{B} \\  \widetilde{A} h &  \widetilde{A} \widetilde{B}\end{array}\right] =
 \left[\begin{array}{ccc} 1 &   0 \\  0 &  I_{n-1}\end{array}\right]
\]
It follows that $ \widetilde{A} \widetilde{B}=I_{n-1}$ and $ \widetilde{A} h=0$, $v^th=1$. Now, the induction hypothesis lead us to conclude that $h=0$, which contradicts $v^th=1$. Hence $AB=I_{n}$ can't hold if the first column of $A$ contains only zeros.

\noindent \textbf{Case 2. } The first column of $A$ is not identically zero. 

In this case we can perform elementary operations with associated elementary matrices $E_1,\cdots, E_t$  to transform the matrix $A$ into a matrix $A^*$ of the form 
$A^*= E_1\cdots E_t A= \left[\begin{array}{ccc} 1 &   (v^*)^t \\ 0 & \widetilde{A^*}\end{array}\right]$.  Then $B^*=BE_t^{-1}\cdots E_1^{-1}$ satisfies 
\begin{equation} \label{truco1}
A^*B^*=E_1\cdots E_t AB E_t^{-1}\cdots E_1^{-1}= E_1\cdots E_t \cdot E_t^{-1}\cdots E_1^{-1} =I_n.
\end{equation}
Let us write $B^*= \left[\begin{array}{ccc} \beta^* &   (w^*)^t \\ h^* & \widetilde{B^*}\end{array}\right]$. Then
\begin{equation}  \label{truco2}
\begin{array}{llll}
 \left[\begin{array}{ccc} 1 &   0 \\ 0 & I_{n-1} \end{array} \right]  &= &  I_n = A^*B^*=  \left[\begin{array}{ccc} 1 &   (v^*)^t \\ 0 & \widetilde{A^*}\end{array}\right]  \left[\begin{array}{ccc} \beta^* &   (w^*)^t \\ h^* & \widetilde{B^*}\end{array} \right] \\
&=& \left[\begin{array}{ccc}  \beta^* +(v^*)^th^* &    (w^*)^t+(v^*)^t\widetilde{B^*} \\  \widetilde{A^*} h^* &  \widetilde{A^*} \widetilde{B^*}\end{array}\right]
\end{array}
\end{equation}
In particular, we have that $\widetilde{A^*} \widetilde{B^*}=I_{n-1}$,  and $\widetilde{A^*} h^*=0$, so that the induction hypothesis implies that $h^*=0$ and the identity 
$ \beta^* +(v^*)^th^* =1$ is reduced to $ \beta^*=1$. 
Assume that $Ax=0$, with $x^t=[x_1,x^*]$. Then $A^*x=E_1\cdots E_t A x=E_1\cdots E_t 0=0$. Hence
\[
0=A^*x= A^*\left[\begin{array}{ccc} x_1 \\  x^* \end{array}\right] =   \left[\begin{array}{ccc} 1 &   (v^*)^t \\ 0 & \widetilde{A^*}\end{array}\right] \left[\begin{array}{ccc} x_1 \\  x^* \end{array}\right]  =  \left[\begin{array}{ccc} x_1 +   (v^*)^t x^* \\  \widetilde{A^*}x^*\end{array}\right] 
\]
It follows that $ \widetilde{A^*}x^*=0$ and the induction hypothesis implies that $x^*=0$. Hence $0=x_1+ (v^*)^t x^* =x_1$. This proves $x=0$.
\end{proof}
\begin{lemma} \label{tres}
If $AB=I_n$ then the first column of $A$ can't be the zero vector.
\end{lemma}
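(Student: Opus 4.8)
The plan is to prove the contrapositive, or rather to derive a contradiction directly from the assumption that the first column of $A$ vanishes. Suppose, for contradiction, that $AB=I_n$ and yet the first column of $A$ is the zero vector. Writing $A$ in the block form of Lemma \ref{uno}, the hypothesis that the first column is zero means precisely that $\alpha=0$ and $u=0$. This is exactly the situation analyzed in Case 1 of the proof of Lemma \ref{dos}, so I would like to reuse that computation.

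First I would apply Lemma \ref{uno} to compute the product $AB$ under the assumption $\alpha = 0$, $u=0$. The block decomposition then collapses to
\[
I_n = AB = \left[\begin{array}{ccc} v^th &   v^t\widetilde{B} \\  \widetilde{A} h &  \widetilde{A} \widetilde{B}\end{array}\right],
\]
and comparing this with the block form of $I_n$ yields in particular the two scalar/vector identities $\widetilde{A}\widetilde{B}=I_{n-1}$, $\widetilde{A}h=0$, and $v^th=1$. The crucial observation is that the pair $(\widetilde{A},\widetilde{B})$ now satisfies the hypothesis of Lemma \ref{dos} at size $n-1$: we have $\widetilde{A}\widetilde{B}=I_{n-1}$, so Lemma \ref{dos} applies to any vector killed by $\widetilde{A}$.

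The finishing step is then immediate. Since $\widetilde{A}h=0$ and $\widetilde{A}\widetilde{B}=I_{n-1}$, Lemma \ref{dos} forces $h=0$. But then $v^th = v^t 0 = 0 \neq 1$, contradicting the identity $v^th=1$ extracted above. Hence the assumption that the first column of $A$ is zero is untenable, which is exactly the claim. I expect no real obstacle here: the main subtlety is simply to notice that Lemma \ref{dos}, although stated and proved by induction on its own, can be invoked directly at size $n-1$ on the submatrices $\widetilde{A},\widetilde{B}$, so that no separate induction is needed in this lemma. The only care required is to make sure the block identities are read off correctly from the product formula of Lemma \ref{uno}.
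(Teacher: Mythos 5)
Your proof is correct, but it takes a genuinely different (and heavier) route than the paper. You rerun the block computation of Case 1 of Lemma \ref{dos}: assuming the first column of $A$ vanishes, you read off $v^th=1$, $\widetilde{A}h=0$ and $\widetilde{A}\widetilde{B}=I_{n-1}$ from Lemma \ref{uno}, and then apply Lemma \ref{dos} at size $n-1$ to the pair $(\widetilde{A},\widetilde{B})$ to force $h=0$, contradicting $v^th=1$. This is legitimate --- Lemma \ref{dos} is already established for every size, so invoking it at $n-1$ involves no circularity and no new induction --- but note that your block decomposition tacitly assumes $n\geq 2$, so strictly speaking you owe a one-line remark for $n=1$ (where $AB=1$ forces $A\neq 0$). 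The paper's proof is shorter and sharper: the first column of $A$ \emph{is} the vector $Ae_1$, where $e_1=[1,0,\cdots,0]^t$; so if that column were zero, then $Ae_1=0$ with $e_1\neq 0$, directly contradicting Lemma \ref{dos} applied at the same size $n$. That single observation eliminates all block bookkeeping, uses Lemma \ref{dos} as a black box, and covers $n=1$ uniformly. Your argument, in exchange, makes explicit how the contradiction arises inside the block structure, but it essentially repackages work already done in proving Lemma \ref{dos}; the identification ``the $j$-th column of $A$ equals $Ae_j$'' is the shortcut worth internalizing here.
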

\begin{proof} If the first column of $A$ vanishes, then $Ae_1=0$, where $e_1$ denotes the vector $[1,0,\cdots,0]^t\in\mathbb{K}^n$, which is not the zero vector. This contradicts Lemma \ref{dos}. 
\end{proof}
Now we can demonstrate the main result of this paper:

\begin{proof}[Proof of Theorem \ref{main}]
It follows from Lemma \ref{tres} that the first column of $A$ does not vanish identically. Hence we can proceed as in the proof of Case 2 in Lemma \ref{dos} to construct the matrices $A^*= E_1\cdots E_t A= \left[\begin{array}{ccc} 1 &   (v^*)^t \\ 0 & \widetilde{A^*}\end{array}\right]$ and $B^* =BE_t^{-1}\cdots E_1^{-1}$. Obviously, these matrices satisfy  \eqref{truco1} and   \eqref{truco2}.  
Moreover, 
\[
B^*A^*=B E_t^{-1}\cdots E_1^{-1}E_1\cdots E_t A= B A.
\]
In particular,  \eqref{truco2} leads to $\widetilde{A^*} \widetilde{B^*}=I_{n-1}$, and the induction hypothesis implies that $\widetilde{B^*} \widetilde{A^*}=I_{n-1}$ too. Now Lemma \ref{dos} and the equality $\widetilde{A^*} h^*=0$ imply that $h^*=0$. Using this fact on the identity $\beta^* +(v^*)^th^*=1$ leads to $\beta^*=1$. Furthermore, we also have that 
\begin{equation}\label{truco}
(w^*)^t+(v^*)^t\widetilde{B^*} =0.
\end{equation}
Let us now consider the product $BA$:
\begin{eqnarray*}
BA &=& B^*A^*=   \left[\begin{array}{ccc} 1 &   (w^*)^t \\ 0 & \widetilde{B^*}\end{array} \right] \left[\begin{array}{ccc} 1 &   (v^*)^t \\ 0 & \widetilde{A^*}\end{array}\right]  \\
&=&  \left[\begin{array}{ccc} 1 &   (v^*)^t+ (w^*)^t  \widetilde{A^*}   \\ 0   &  \widetilde{B^*}\widetilde{A^*} \end{array}\right] \\
&=&  \left[\begin{array}{ccc} 1 &   (v^*)^t+ (w^*)^t  \widetilde{A^*}   \\ 0   &  I_{n-1} \end{array}\right] .\\
\end{eqnarray*}
It follows from \eqref{truco} that 
\begin{eqnarray*}
0 &=& ((w^*)^t+(v^*)^t\widetilde{B^*})\widetilde{A^*} \\
&=& (w^*)^t\widetilde{A^*}+(v^*)^t\widetilde{B^*}\widetilde{A^*} \\
&=& (w^*)^t\widetilde{A^*}+(v^*)^tI_{n-1}\\
&=& (v^*)^t+(w^*)^t\widetilde{A^*},
\end{eqnarray*}
which implies that 
\[
BA=B^*A^*=  \left[\begin{array}{ccc} 1 &   0 \\ 0 & I_{n-1} \end{array} \right]  =   I_n. 
\]
This ends the proof.
\end{proof}

\section{Other proofs}

\subsection{A proof based on dimension of vector spaces}

By fixing a basis of the vector space $\mathbb{K}^n$, we can identify in the natural way $M_n(\mathbb{K})$ with the vector space $End(\mathbb{K}^n)$.  Then the range of a matrix $A$ coincides with the dimension of the associated image space (which is spanned by columns of $A$). From $AB=I_n$ we conclude that $AB$ has range $n$ and, consequently, the range of $B$ must be also $n$ since, otherwise, the image space of $AB$
would be spanned by a set of at most $n-1$ vectors (just take a set of $n-1$ vectors spanning the range of $B$ and apply $A$ to these concrete vectors). Now,  $B=BI_n=B(AB)= (BA)B$ implies $(I_n-BA)B=0$ and, since $B$ has full rank, $I_n-BA$ is the null map. This means $BA=I_n$ and proves Theorem \ref{main}.

\subsection{A proof based on Noetherian property}
We take this proof from the well known abstract algebra website \cite{AA}.  A ring $R$ is called Dedekind-finite if for all $a,b \in R$, the identity $ab=1$ implies $ba=1$. 

\begin{theorem} Every (left or right) Noetherian ring R is Dedekind-finite.
\end{theorem}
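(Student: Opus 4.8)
The plan is to reduce the statement to the classical fact that a surjective endomorphism of a Noetherian module is automatically injective. Assume $ab=1$; I must deduce $ba=1$. I would treat the right Noetherian case first and then the left case by symmetry. Regard $R$ as a right module over itself, so that right Noetherianity is precisely the ascending chain condition on right ideals, i.e.\ on submodules of $R_R$. Consider left multiplication $\lambda_a\colon R\to R$, $\lambda_a(x)=ax$. It is right $R$-linear because $\lambda_a(xr)=a(xr)=(ax)r=\lambda_a(x)r$, hence an endomorphism of $R_R$, and it is surjective since $\lambda_a(br)=(ab)r=r$ for every $r\in R$, using the hypothesis $ab=1$.

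The heart of the argument is the lemma that a surjective endomorphism $f$ of a Noetherian module $M$ is injective. I would prove it by the standard kernel-chain trick: the chain $\ker f\subseteq\ker f^2\subseteq\cdots$ stabilizes, say $\ker f^n=\ker f^{n+1}$; given $x\in\ker f$, surjectivity of $f^n$ yields $y$ with $x=f^n(y)$, so that $f^{n+1}(y)=f(x)=0$, forcing $y\in\ker f^{n+1}=\ker f^n$ and therefore $x=f^n(y)=0$. Applying this to $f=\lambda_a$ on $M=R_R$ shows $\lambda_a$ is injective. Since $\lambda_a(1-ba)=a-aba=a-(ab)a=a-a=0$, injectivity gives $1-ba=0$, that is $ba=1$.

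For the left Noetherian case I would argue symmetrically, working with $R$ as a left module ${}_RR$ and with right multiplication $\rho_b\colon R\to R$, $\rho_b(x)=xb$. This map is left $R$-linear and surjective because $\rho_b(ra)=r(ab)=r$; the same lemma makes it injective, and $\rho_b(1-ba)=b-bab=b-b(ab)=0$ again yields $ba=1$. I expect no serious obstacle beyond the kernel-chain lemma itself, which is the only place the Noetherian hypothesis is genuinely used. The one point that demands care is the handedness bookkeeping: right Noetherianity must be paired with the module $R_R$ and with $\lambda_a$, while left Noetherianity must be paired with ${}_RR$ and with $\rho_b$, so that in each case the map considered really is a module endomorphism and the chain condition invoked is exactly the one assumed.
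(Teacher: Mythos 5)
Your proposal is correct and takes essentially the same approach as the paper: the paper's argument is precisely your kernel-chain lemma inlined, using the map $f(r)=rb$ on ${}_RR$ in the left Noetherian case and applying the stabilization $\ker f^n=\ker f^{n+1}$ to conclude $ba-1=0$. The only differences are organizational --- you abstract the trick into the standard lemma that a surjective endomorphism of a Noetherian module is injective, and you write out the right Noetherian case with $\lambda_a$ explicitly, which the paper leaves to symmetry.
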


\begin{proof} We will assume that $R$ is left Noetherian. Suppose that $ab=1$ for some $a,b \in R$. Define the map $f: R \longrightarrow R$  by $f(r)=rb$. Clearly $f$ is an $R$-module homomorphism and is onto because $f(ra)=(ra)b=r(ab)=r$, for all $r \in R$. Now we have an ascending chain of left ideals of $R$
\[
\ker f \subseteq \ker f^2 \subseteq \cdots.
\]
Since $R$ is left Noetherian, this chain stabilizes at some point, which means that  there exists some $n$ such that $\ker f^n = \ker f^{n+1}$. Clearly $f^n$ is onto because $f$ is onto. Thus $f^n(c)=ba-1$  for some $c \in R$. Then $f^{n+1}(c)=f(ba-1)=(ba-1)b=b(ab)-b=0$. Hence $c \in \ker f^{n+1}=\ker f^n$ and therefore $ba-1=f^n (c) = 0$. This ends the proof. \end{proof}

\begin{theorem} \label{algebra} Let $\mathbb{K}$ be a field and let $R$ be a finite dimensional $\mathbb{K}$-algebra. Then $R$ is Dedekind-finite.\end{theorem}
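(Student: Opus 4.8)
The plan is to reduce this statement to the previous theorem by showing that every finite-dimensional $\mathbb{K}$-algebra is left Noetherian. The key observation is that each left ideal of $R$ is, in particular, a $\mathbb{K}$-linear subspace of $R$: indeed, for $\lambda\in\mathbb{K}$ and $x$ in a left ideal $I$, the element $\lambda x=(\lambda\cdot 1_R)x$ lies in $I$, because $\lambda\cdot 1_R\in R$ and $I$ is closed under left multiplication by elements of $R$. Thus the left ideals of $R$ sit inside the lattice of $\mathbb{K}$-subspaces of $R$.

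First I would fix $d=\dim_{\mathbb{K}}R<\infty$. Given any ascending chain $I_1\subseteq I_2\subseteq\cdots$ of left ideals, each $I_k$ is a subspace and hence carries a well-defined dimension $\dim_{\mathbb{K}}I_k\in\{0,1,\dots,d\}$. The resulting sequence of dimensions is nondecreasing and bounded above by $d$, so it must stabilize; since a strict inclusion of subspaces forces a strict increase of dimension, the chain of ideals itself stabilizes. Therefore $R$ satisfies the ascending chain condition on left ideals, that is, $R$ is left Noetherian.

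Having established that $R$ is left Noetherian, I would simply invoke the previous theorem, which asserts that every left Noetherian ring is Dedekind-finite, to conclude that $ab=1$ implies $ba=1$ in $R$. This finishes the argument.

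The proof has essentially no hard step: the only point demanding a little care is the verification that left ideals are genuine $\mathbb{K}$-subspaces, which rests on the fact that $\mathbb{K}$ embeds into $R$ via $\lambda\mapsto\lambda\cdot 1_R$ and that scalar multiplication agrees with left multiplication by this image. Once this is in place, finite dimensionality yields the ascending chain condition immediately, and all the real content of the theorem is carried by the preceding Noetherian result.
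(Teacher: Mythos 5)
Your proof is correct and follows essentially the same route as the paper: left ideals are $\mathbb{K}$-subspaces, finite dimensionality forces the ascending chain condition, hence $R$ is left Noetherian, and the preceding theorem on Noetherian rings gives Dedekind-finiteness. You simply spell out two details the paper leaves implicit (why left ideals are subspaces, and why bounded dimension makes chains stabilize), which is a welcome addition but not a different argument.
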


\begin{proof}  Every left ideal of $R$ is  a $\mathbb{K}$-vector subspace of $R$ and thus, since $\dim_{\mathbb{K}} R < \infty$, any ascending chain of left ideals of $R$ will stop at some point. So $R$ is left Noetherian and thus, it is also Dedekind-finite. \end{proof}

Now Theorem \ref{main} follows as a corollary of Theorem \ref{algebra} since $M_n(\mathbb{K})$ is a  finite dimensional $\mathbb{K}$-algebra. What is more, we can conclude that  $M_n(R)$ is Dedekind-finite for any commutative domain $R$  because $M_n(R)$ is a subring of $M_n(\hat{R})$, where $\hat{R}$ is the quotient field of $R$.

\section{The result does not hold for operators on infinite dimensional vector spaces}

Let $V$ be any infinite dimensional vector space over the field $\mathbb{K}$. Let $\{e_k\}_{k=1}^{\infty}$ be a linear independent set in $V$ and consider $\beta= \{e_k\}_{k=1}^{\infty}\cup \{w_j\}_{j\in J}$ a basis of $V$ which contains $e_k$ for all $k$. We introduce the linear maps $A,B\in End(V)$ defined by 
\[
\left \{\begin{array}{ccc} A(e_k)=e_{k+1} & \text{for all }  k=1,2,\cdots \\
A(w_j)=w_j & \text{ for all } j\in J 
\end{array} \right . 
\]
and 
\[
\left \{\begin{array}{ccc} 
 B(e_1)=0 &  \\
B(e_k)=e_{k-1} & \text{for all }  k=2,3, \cdots \\
B(w_j)=w_{j} & \text{ for all } j\in J 
\end{array} \right . 
\]

Then $BA=1_d$ and $ABe_1=A0=0$, so that $AB\neq 1_d$.  This, jointly with Theorem \ref{main}, proves the following
\begin{theorem} Let $V$ be a vector space. Then $End(V)$ is Dedekind-finite iff $\dim V<\infty$.
\end{theorem}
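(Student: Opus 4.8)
The plan is to prove the two implications separately, both of which are essentially already in hand from the material developed above.

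First I would establish the implication that $\dim V<\infty$ forces $End(V)$ to be Dedekind-finite. Fixing a basis of $V$ when $\dim V=n<\infty$, we identify $End(V)$ with $M_n(\mathbb{K})$ as $\mathbb{K}$-algebras. Given $f,g\in End(V)$ with $fg=1_d$, their associated matrices $A,B\in M_n(\mathbb{K})$ satisfy $AB=I_n$; Theorem \ref{main} then yields $BA=I_n$, that is, $gf=1_d$. This is precisely the assertion that $End(V)$ is Dedekind-finite, and it requires no new work beyond citing Theorem \ref{main}.

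For the converse I would argue by contraposition and show that $\dim V=\infty$ prevents $End(V)$ from being Dedekind-finite. This is exactly what the shift-operator construction preceding the statement accomplishes: when $\dim V=\infty$ we may extract a countably infinite linearly independent set $\{e_k\}_{k=1}^{\infty}$ and extend it (via Zorn's lemma) to a basis $\beta=\{e_k\}_{k=1}^{\infty}\cup\{w_j\}_{j\in J}$. Defining $A$ and $B$ on $\beta$ as above produces well-defined linear endomorphisms, and a direct check on basis vectors gives $BA=1_d$ while $ABe_1=A0=0\neq e_1$, so $AB\neq 1_d$. Thus the identity $BA=1_d$ fails to force $AB=1_d$, witnessing that $End(V)$ is not Dedekind-finite.

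The main obstacle is really only the recognition that the infinite-dimensional hypothesis is precisely what guarantees the existence of the countable linearly independent family $\{e_k\}_{k=1}^{\infty}$ and its extension to a basis of $V$; the remaining linear-algebraic content of both directions is routine verification on basis elements. Combining the two implications yields the claimed equivalence.
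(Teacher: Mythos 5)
Your proof is correct and follows essentially the same route as the paper: the finite-dimensional direction is the identification of $End(V)$ with $M_n(\mathbb{K})$ plus Theorem \ref{main}, and the infinite-dimensional direction is exactly the paper's shift-operator construction showing $BA=1_d$ but $AB\neq 1_d$. You have merely made explicit the two implications that the paper leaves implicit in its phrase ``this, jointly with Theorem \ref{main}, proves the following.''
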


\bibliographystyle{amsplain}


\end{document}